\newtheorem{theorem}{Theorem}[section]
\newtheorem{lemma}[theorem]{Lemma}
\newtheorem{proposition}[theorem]{Proposition}
\theoremstyle{remark}
\newtheorem{remark}[theorem]{Remark}
\theoremstyle{definition}
\newtheorem{definition}[theorem]{Definition}
\renewcommand{\bar}[1]{\overline{#1}}
\newcommand{\fjulcom}{\mathcal{K}_{\mathbb{C}}}
\newcommand{\fjulhyp}{\mathcal{K}_{\mathbb{H}}}
\newcommand{\julcom}{\mathcal{J}_{\mathbb{C}}}
\newcommand{\julhyp}{\mathcal{J}_{\mathbb{H}}}
\newcommand{\mancom}{\mathcal{M}_{\mathbb{C}}}
\newcommand{\manhyp}{\mathcal{M}_{\mathbb{H}}}
\begin{document}
\allowdisplaybreaks 

\title{Julia and Mandelbrot sets for dynamics over the hyperbolic numbers}

\author{Vance Blankers, Tristan Rendfrey, Aaron Shukert, Patrick D. Shipman}


\maketitle

\begin{abstract}
Julia and Mandelbrot sets, which characterize bounded orbits in dynamical systems over the complex numbers, are classic examples of fractal sets. We investigate the analogs of these sets for dynamical systems over the hyperbolic numbers.  Hyperbolic numbers, which have the form $x+\tau y$ for $x,y \in \mathbb{R}$, and   $\tau^2 = 1$ but $\tau \neq \pm 1$, are the natural number system in which to encode geometric properties of the Minkowski space $\mathbb{R}^{1,1}$.  We show that  the hyperbolic analog of the Mandelbrot set parameterizes connectedness of hyperbolic Julia sets. We give a wall-and-chamber decomposition of the hyperbolic plane in terms of these Julia sets.
\end{abstract}

\section{Introduction}\label{sec:intro}

The Mandelbrot set, arising from the study of dynamical systems on the complex plane, has been an object of interest ever since its introduction by Robert W. Brooks and Peter Matelski \cite{BrooksMatelski}.  With its combination of simplicity of definition and complexity of structure, the set exhibits one of the most classical fractal patterns in mathematics.  

The Mandelbrot set gives the set of complex \textit{parameter values} $c$ for which the orbit of the initial point $z_0=0$ is bounded under iterations of the map $f_c: \mathbb{C} \rightarrow \mathbb{C}$ defined by
\begin{align*}
f_c(z) \doteq z^2 + c.
\end{align*}

\begin{definition}
The \emph{Mandelbrot set} $\mancom$ is the set of complex numbers $c\in\mathbb{C}$ for which there exists some $B_c\in\mathbb{R}$ such that for all $n\in\mathbb{N}$, the inequality $\left|f_c^n(0)\bar{f^n_c(0)}\right| < B_c$ is satisfied.
\end{definition}

The left panel of Fig.~\ref{complexJuliaset} shows the Mandelbrot set. 

Julia sets, studied by the pioneers of complex dynamics Gaston Julia and Pierre Fatou, are subsets of complex \textit{phase space} and also exhibit fractal structure. 
\begin{definition}
Fix a polynomial $f:\mathbb{C}\to\mathbb{C}$. The \emph{filled Julia set associated to $f$}, denoted by $\fjulcom(f)$, is the set of values $z_0 \in \mathbb{C}$ for which there exists some $B_{z_0}\in \mathbb{R}$ such that for all $n\in\mathbb{N}$, the inequality $\left|f^n(z_0)\bar{f^n(z_0)}\right| < B_{z_0}$ is satisfied.  The \emph{Julia set associated to $f$}, denoted by $\julcom(f)$, is the boundary of $\fjulcom(f)$.  
\end{definition}

Julia sets associated to the complex quadratic polynomial $f_c$ that defines the Mandelbrot set are shown in the center and right panels of Fig.~\ref{complexJuliaset}. These examples illustrate a surprising connection of a topological nature between Mandelbrot and Julia sets given by the dichotomy theorem.

\begin{figure}[h]
\centerline{\includegraphics[width=6.2in]{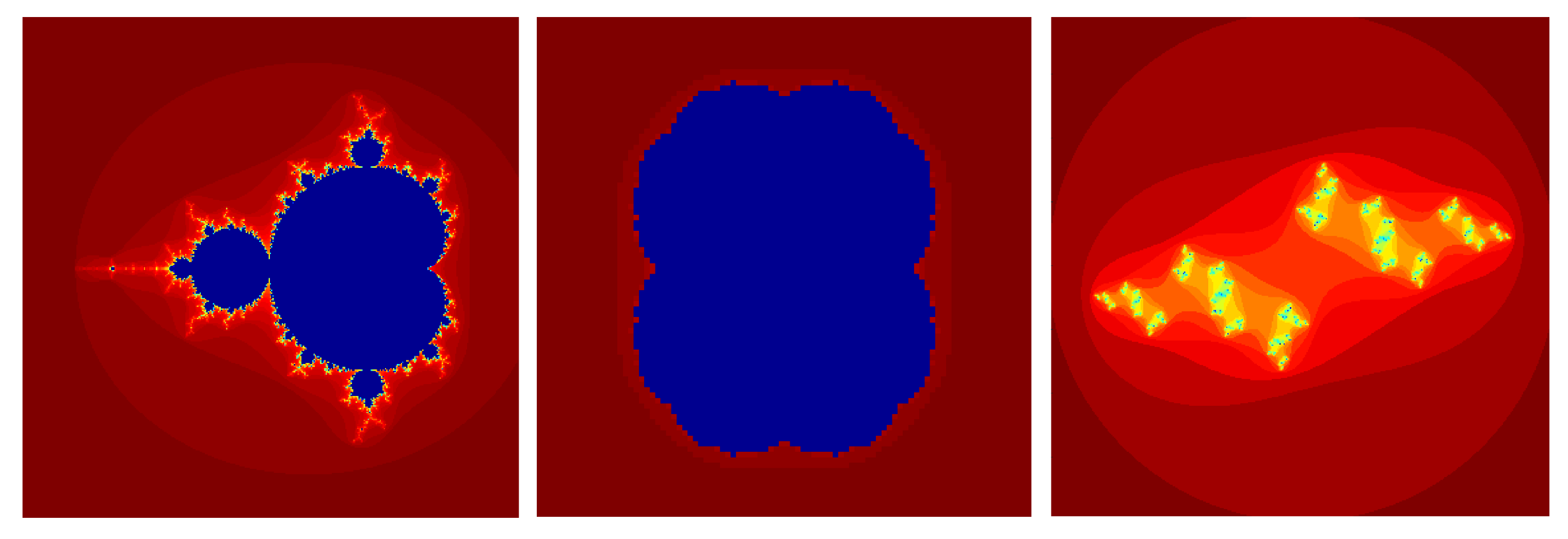}}
\caption{The Mandelbrot and examples of Julia sets.  Left panel: The Mandelbrot set is shown as a subset of parameter space $\mathbb{C}$.  Center right panels: The filled Julia set for $f_c(z) = z^2 + c$ with $c=0.2$ (center panel) and $c=-1+0.5 i$ (right panel). Colors represent the number of iterations before reaching the divergence criteria as described in \cite{devaney92}.  That is, the colors represent the iterations performed before the norm of the iterate grew larger than a chosen bound (chosen to be 4 for these simulations).  Red represents the quickest growth beyond our divergence criterion, whereas blue represents an initial condition whose orbit did not grow beyond the bound in the number of iterations we performed (200). 
}\label{complexJuliaset}
\end{figure}

\textbf{Dichotomy Theorem.}
\emph{The Mandelbrot set parameterizes connectedness of filled Julia sets:}  The filled Julia set $\fjulcom(f_c)$ is connected if $c$ is in the Mandelbrot set and totally disconnected otherwise.

For the examples of Fig.~\ref{complexJuliaset}, the choice $c=0.2$ for the center panel lies in the Mandelbrot set, and the Julia set is connected, whereas the choice $c=-1+0.5i$ for the right panel lies outside the Mandelbrot set, and the Julia set is totally disconnected.  A discussion and proof of this significant result in complex dynamics may be found in \cite{devaney92}.  The Dichotomy Theorem showcases the idea of viewing $\mathbb{C}$ as both the \emph{parameter space} and the \emph{dynamical plane} for a dynamical system. 

Given the rich results for iterations of quadratic maps on the complex plane, it is natural to wonder about the behavior of dynamics on a less well-known but also very useful sibling of the complex numbers, the hyperbolic numbers, $\mathbb{H}$. This number system has connections to diverse topics such as general relativity, differential equations, and the study of abstract algebras \cite{MMST,GMST}.

We investigate the natural analogs of the Mandelbrot set and Julia sets over $\mathbb{H}$, giving an explicit description of the former. Hyperbolic Julia sets turn out to have one of four characteristics: they may be empty, the product of intervals, the product of a Cantor set and an interval, or the product of two Cantor sets. Our main result is a wall-and-chamber decomposition of the hyperbolic plane which provides a hyperbolic-number analog to the Dichotomy Theorem:

\textbf{Quadchotomy Theorem.}
\emph{The hyperbolic Mandelbrot set parameterizes connectedness of filled hyperbolic Julia sets.}

The Quadchotomy Theorem is stated explicitly as Theorem \ref{thm:quad} 


\subsection*{Structure of the Paper}


In Section \ref{sec:hypnum}, we provide an introduction to the hyperbolic numbers, emphasizing characteristic coordinates.  Section \ref{sec:hypmand} defines the hyperbolic Mandelbrot and Julia sets and gives an explicit description of the former. The main result, the Quadchotomy Theorem, is proved in Section \ref{sec:hypjul}. 

\section{Hyperbolic Numbers}
\label{sec:hypnum}


The \emph{hyperbolic numbers} $\mathbb{H}$, sometimes called \emph{motor variables, split-complex numbers, Lorentz numbers} or a wide variety of other names, can be understood in several contexts \cite{harkin04,MMST,GMST,BPS}. Algebraically, $\mathbb{H}$ can be identified with the ring $\mathbb{R}[t]/(t^2-1)$, where we call $\tau$ the image of $t$ in the quotient. Hence they are abstractly isomorphic to $\mathbb{R}\oplus\mathbb{R}$ as a module over $\mathbb{R}$, with generators $1$ and $\tau$.  In analogy to the complex numbers, we write $z=u+\tau v$ for $u,v \in \mathbb{R}$, where   $\tau^2 = 1$ but $\tau \neq \pm 1$.  


Seen as a module over $\mathbb{R}$, hyperbolic numbers admit an automorphism which acts trivially on the component generated by $1$, called \emph{hyperbolic conjugation}. If $z=x+\tau y$, the hyperbolic conjugate is $\bar{z} = x-\tau y$. Hyperbolic conjugation shares properties with complex conjugation; $z=\bar{\bar{z}}$,  $\bar{z} + \bar{w} = \bar{z + w}$, and $\bar{z}\;\bar{w} = \bar{zw}$.


We will refer to $\mathbb{H}$ as the \emph{hyperbolic plane} in analog to the complex plane; our usage is entirely distinct from the geometric notion of the plane equipped with a hyperbolic metric, which would typically be modeled with the Poincar\'{e} disk or upper halfplane. Indeed, the hyperbolic numbers are equipped with a quadratic form, but it does not give rise to a metric or norm. Instead, if $z=x+\tau y$,
\begin{align*}
||z|| = z\bar{z} = x^2 - y^2.
\end{align*}




Representing the hyperbolic number $z=x + \tau y$ as the matrix
 \begin{equation*}
z = 
\begin{bmatrix}
x&y\\
y&x\\
\end{bmatrix}
=
\begin{bmatrix}
1&1\\
-1&1\\
\end{bmatrix}
\begin{bmatrix}
x-y&0\\
0&x+y\\
\end{bmatrix}
\begin{bmatrix}
1&1\\
-1&1\\
\end{bmatrix}^{-1}
,
\end{equation*}
addition $z_1 + z_2 = (x_1 + \tau y_1) + (x_2 + \tau y_2) = (x_1 + x_2) + j(y_1 + y_2)$ and  multiplication $z_1 z_2 = (x_1 + \tau y_1)(x_2 + \tau y_2) = (x_1 x_2 + y_1 y_2) + \tau (x_1 y_2 + x_2 y_1)$ correspond respectively to matrix addition and multiplication.  The matrix approach reveals the natural \textit{characteristic coordinates} $X = x-y$ and $Y= x+y$ with which to work with hyperbolic numbers. Representing a hyperbolic number in characteristic coordinates as 
$$z = \left( \begin{array}{cc} X & 0 \\ 0 & Y \end{array} \right),$$ the hyperbolic multiplication


\begin{equation*}
z_1z_2 =
\begin{bmatrix}
x_1&y_1\\
y_1&x_1\\
\end{bmatrix}
\begin{bmatrix}
x_2&y_2\\
y_2&x_2\\
\end{bmatrix}
=
\begin{bmatrix}
x_1x_2+y_1y_2 & x_1y_2 + x_2y_1\\
x_1y_2 + x_2y_1 & x_1x_2 + y_1y_2\\
\end{bmatrix}
\end{equation*}
is simply
\begin{equation*}
 z_1 z_2 = \left( \begin{array}{cc} X_1 & 0 \\ 0 & Y_1 \end{array} \right)\left( \begin{array}{cc} X_2 & 0 \\ 0 & Y_2 \end{array} \right) = \left( \begin{array}{cc} X_1 X_2 & 0 \\ 0 & Y_1 Y_2 \end{array}\right).
\end{equation*}
In addition, the quadratic form has a simple form in characteristic coordinates;
\begin{align}
\label{rem:charnorm}
z \bar{z} = (x + \tau y)(x - \tau y) = x^2-y^2 = (x-y)(x+y) = XY.  
\end{align}


The sets $D_+ := \{z = x+\tau x\}$ and $D_- :=\{z = x - \tau x\}$ in the hyperbolic  plane where either characteristic coordinate vanishes form the axes of the characteristic coordinate system.  Note that $D_+$ and $D_-$ are closed under addition and multiplication.



\section{The Hyperbolic Mandelbrot Set}
\label{sec:hypmand}

The simple representation of multiplication for hyperbolic numbers in characteristic coordinates gives rise to hyperbolic Mandelbrot and Julia sets that contrast significantly from the classical Mandelbrot and Julia sets of complex numbers.  Our definitions for hyperbolic Mandelbrot and Julia sets closely follow the corresponding definitions over $\mathbb{C}$. If $f:\mathbb{H}\to\mathbb{H}$ is a function, we again write $f^2(z):= f(f(z))$, $f^3(z) := f(f(f(z)))$, etc.

\begin{definition}
For each $c\in\mathbb{H}$, consider the map
\begin{align*}
f_c(z) = z^2 + c.
\end{align*}
The \emph{hyperbolic Mandelbrot set} $\manhyp$ is the set of values $c\in\mathbb{H}$ for which there exists some $B_c\in\mathbb{R}$ such that for all $n\in\mathbb{N}$, the inequality $\left|f_c^n(0)\bar{f^n_c(0)}\right|< B_c$ is satisfied.
\end{definition}

\begin{definition}
Fix a polynomial $f(z):\mathbb{H}\to\mathbb{H}$. The \emph{hyperbolic filled Julia set associated to $f$}, denoted $\fjulhyp(f)$, is the set of values $z_0 \in \mathbb{H}$ for which there exists some $B_{z_0}\in \mathbb{R}$ such that for all $n\in\mathbb{N}$, the inequality $\left|f^n(z_0)\bar{f^n(z_0)}\right|<B_{z_0}$ is satisfied.  The \emph{hyperbolic Julia set associated to $f$}, denoted by $\julhyp(f)$, is the boundary of $\fjulhyp(f)$. 
\end{definition}

The similarities in definition to the complex case lead to several of the same immediate results.  We will use the fact that, as for the complex Mandelbrot set \cite{devaney92}, $\manhyp$ is invariant under conjugation. We note as well that since both complex and hyperbolic conjugation fixes $\mathbb{R} \subset \mathbb{C},\mathbb{H}$, we must have $\manhyp \cap \mathbb{R} = \mancom \cap \mathbb{R}$.


\begin{remark}
The two definitions are in many ways similar, but the Mandelbrot set is a subset of \emph{parameter space}, whereas a Julia set is said to lie in the \emph{dynamical plane}. Theorem \ref{thm:quad} makes the connection between $\manhyp$ and $\fjulhyp(f)$ explicit for $f$ quadratic.
\end{remark}




Key to determining the hyperbolic Mandelbrot and Julia sets is the observation that in characteristic coordinates the map $f_c(z) = z^2+c_1+\tau c_2$ decouples into the real quadratic map $f_c(x) = x^2 +c$ on each coordinate.  Indeed, $f_c$ can be written as 
\begin{align*}
f_c(x,y) &= (x^2 + y^2+c_1, 2xy+c_2) \\
&= \left(\frac{1}{2}\left(X^2+Y^2\right)+c_1,\frac{1}{2}\left(Y^2-X^2\right) + c_2\right).
\end{align*}
Or, writing $f_c$ as a function characteristic coordinates,
\begin{align}
\label{eq:charlog}
f_c(X,Y) &= \left(X^2 + c_1 - c_2, Y^2 + c_1 + c_2\right) \nonumber \\
&= \left(X^2 + c_X, Y^2 + c_Y \right) \\
&= \left(f_{c_X}(X),f_{c_Y}(Y)\right), \nonumber
\end{align}
where $c_X=c_1-c_2$ and $c_Y=c_1+c_2$ are representations of the constants in characteristic coordinates.  In characteristic coordinates, the map decouples into a map on each coordinate, so that under iteration we have
\begin{align}
\label{exchar}
f_c^n(X,Y) = \left(f_{c_X}^n(X),f_{c_Y}^n(Y)\right).
\end{align}
The map $f_c(x) = x^2 +c  :  \mathbb{R}\rightarrow \mathbb{R}$ (for $x,c \in \mathbb{R}$), whose behavior is well known \cite{devaney92}, is therefore key to finding hyperbolic Julia sets.    

For  $c \leq \frac{1}{4}$, the behavior of the dynamical system $x_{n+1} = f_c(x_n)$ may be understood by a change of coordinates to the well-known logistic map.  Writing
$$\rho_+ (c) = \frac{1 + \sqrt{1-4c}}{2}, \; \; \xi = \frac{1}{2}\left( 1 - \frac{x}{\rho_+(c)} \right), \; \; r = 2 \rho_+(c), $$
the dynamical system $x_{n+1} = f_c(x_n)$ becomes the logistic dynamical system $\xi_{n+1}=g_r(\xi_n)$ for $g_r(\xi) = r (1-\xi) \xi$. 

The case $c<-2$ corresponds to $r>4$, for which all orbits of the logistic map diverge to infinity except for points in a Cantor set.  For $g_r(\xi)$, the Cantor set  is contained in $[0,\eta_-] \cup [\eta_+,1]$, where $\eta_\pm = \frac{1}{2r}(r \pm \sqrt{r^2-4r})$.   For $f_c(x)$, this translates to a Cantor set contained in $[-\rho_+(c),-\gamma(c)] \cup [\gamma(c),\rho_+(c)]$, where $4\gamma^2(c) =-4c-2-2\sqrt{1-4c}$.  Note that for $c<-2$, $\gamma(c)<0$, so the Cantor set for $f_c(x)$ is bounded away from 0.   

The case $-2 \leq c \leq \frac{1}{4}$ corresponds to $1 \leq r \leq 4$, for which orbits of the logistic map are bounded for $\xi \in [0,1]$ and diverge to infinity otherwise.  That is, for $c \in [-2, \frac{1}{4}]$, the orbit $f^n_c(x)$ is bounded if and only if $-\rho_+(c) \leq x \leq \rho_+(c)$.  In this case, the fixed points are $x=\frac{1}{2}(1 \pm \sqrt{1-4c})$; there is a fixed point equal to zero only for $c=0$.   

That  $\fjulcom(f_c)\cap \mathbb{R}$ is empty for $\frac{1}{4} < c$ may be seen as follows: For any $x \in \mathbb{R}$, the minimum value of $f_c(x) - x$ is $c-\frac{1}{4}$.  Thus, for any $x_0 \in \mathbb{R}$ and positive integer $n$, $f_c^{n+1}(x_0) \geq f_c^n(x_0) + c-\frac{1}{4}$; $f_c^n(x_0) \geq x_0 + n\left(c - \frac{1}{4} \right)$.  It follows that for $c > \frac{1}{4}$,  $f_c^n(x_0) \rightarrow \infty$ as $n \rightarrow \infty$.  

In summary, we have

\begin{lemma}
\label{lem:empty}
Let $f_c(z) = z^2+c$ for $c\in \mathbb{R}$.  Then, the intersection $\fjulcom(f_c)\cap \mathbb{R}$ is

\vspace{1mm}

i) a Cantor set not containing 0 if $c < -2$, 

\vspace{1mm}

ii) the interval $-\rho_+(c) \leq x \leq \rho_+(c)$ if $-2 < c < \frac{1}{4}$, 

\vspace{1mm}

iii) empty if  $\frac{1}{4} < c$.
\end{lemma}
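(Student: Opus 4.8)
The plan is to treat the three ranges of $c$ separately, unified by the affine change of coordinates to the logistic map recorded just before the lemma. The key observation is that the substitution $\xi = \tfrac12\bigl(1 - x/\rho_+(c)\bigr)$ is an affine bijection of $\mathbb{R}$ (note $\rho_+(c)\ge\tfrac12>0$ for every $c\le\tfrac14$), and an affine bijection preserves boundedness of sequences in both directions. Hence, for $c\le\tfrac14$, an orbit $\{f_c^n(x)\}_n$ is bounded if and only if the conjugate orbit $\{g_r^n(\xi)\}_n$ is bounded; consequently $\fjulcom(f_c)\cap\mathbb{R}$ is precisely the preimage, under this coordinate change, of the non-escaping set of the logistic map $g_r$ with $r = 2\rho_+(c) = 1+\sqrt{1-4c}$. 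Since $r$ increases as $c$ decreases, the regimes $c<-2$ and $-2<c<\tfrac14$ correspond exactly to $r>4$ and $1<r<4$; the remaining regime $c>\tfrac14$ lies outside the range of the conjugacy and I would handle it directly.

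For case (iii) the direct argument is short: for $x\in\mathbb{R}$ the difference $f_c(x)-x = x^2-x+c$ is a parabola with minimum value $c-\tfrac14$ at $x=\tfrac12$, which is strictly positive when $c>\tfrac14$. Thus $f_c(x)\ge x+(c-\tfrac14)$ for all $x$, and a one-line induction yields $f_c^n(x_0)\ge x_0+n\bigl(c-\tfrac14\bigr)\to\infty$, so no real orbit is bounded and $\fjulcom(f_c)\cap\mathbb{R}=\varnothing$.

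For the two remaining cases I would invoke the classical description of the logistic dynamics from \cite{devaney92}. When $1<r<4$ the bounded orbits of $g_r$ are exactly those starting in $[0,1]$, and pulling $[0,1]$ back through $\xi=\tfrac12(1-x/\rho_+(c))$ gives the symmetric interval $-\rho_+(c)\le x\le\rho_+(c)$, establishing (ii). When $r>4$ the non-escaping set of $g_r$ is a Cantor set inside $[0,\eta_-]\cup[\eta_+,1]$; pulling this back produces a Cantor set in the $x$-coordinate lying in $[-\rho_+(c),-\gamma(c)]\cup[\gamma(c),\rho_+(c)]$, establishing (i). To see that $0$ is excluded, I would observe that $x=0$ corresponds to $\xi=\tfrac12$, the point at which $g_r$ attains its maximum $r/4>1$; hence $\xi=\tfrac12$ escapes $[0,1]$ at the first step and lies in the deleted central gap, so $0$ is not in the Cantor set. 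I expect the only genuine difficulty to be justifying the logistic facts at full rigor---especially the Cantor-set structure for $r>4$---but since these are classical and already cited, the substantive work reduces to the routine bookkeeping of transporting the relevant intervals and the single critical point back through the affine conjugacy.
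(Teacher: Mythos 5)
Your proposal is correct and follows essentially the same route as the paper: the affine conjugacy $\xi=\tfrac12(1-x/\rho_+(c))$ to the logistic map $g_r$ with $r=2\rho_+(c)$ for cases (i) and (ii), citing the classical logistic dynamics for $r>4$ and $1<r<4$, and the direct estimate $f_c^n(x_0)\ge x_0+n(c-\tfrac14)$ for case (iii). Your argument that $0\notin\fjulcom(f_c)\cap\mathbb{R}$ when $c<-2$ (namely that $x=0$ corresponds to the critical point $\xi=\tfrac12$, which maps to $r/4>1$ and escapes) is a slightly cleaner justification than the paper's appeal to the bound $\gamma(c)$, but it is a detail within the same approach.
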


The decoupling of the characteristic coordinates endows $\manhyp$ with a much simpler structure than $\mancom$, as detailed in the next theorem.

\begin{theorem}
\label{thm:mandsquare}
Let $S$ be the square given by
\begin{align*}
S :&= \left\{\max\left(x-\frac{1}{4},-x-2\right)\le y\le\min\left(x+2,-x+\frac{1}{4}\right)\right\} \\
&= \left\{(X,Y) \in \left[-2,\textstyle{\frac{1}{4}}\right]^2\right\},
\end{align*}
and let $D = D_{+}\cup D_{-}$ be the union of the diagonals in $\mathbb{H}$. Then $\manhyp = S\cup D$.
\end{theorem}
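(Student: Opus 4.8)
The plan is to work entirely in characteristic coordinates and exploit the decoupling \eqref{exchar}. Write $c = c_1 + \tau c_2$ with characteristic coordinates $c_X = c_1 - c_2$ and $c_Y = c_1 + c_2$. Since $z_0 = 0$ has characteristic coordinates $(0,0)$, equation \eqref{exchar} gives $f_c^n(0) = \left(f_{c_X}^n(0), f_{c_Y}^n(0)\right)$, and combining this with \eqref{rem:charnorm} the defining inequality becomes
\begin{align*}
\left|f_c^n(0)\bar{f_c^n(0)}\right| = \left|f_{c_X}^n(0)\, f_{c_Y}^n(0)\right|.
\end{align*}
Hence $c \in \manhyp$ if and only if the product of the two real critical orbits $\{f_{c_X}^n(0)\}_n$ and $\{f_{c_Y}^n(0)\}_n$ remains bounded. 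I would first record the real dichotomy: the orbit $\{f_a^n(0)\}$ is bounded exactly for $a \in [-2,\tfrac14]$ and satisfies $|f_a^n(0)| \to \infty$ otherwise. The bounded range comes from case (ii) of Lemma~\ref{lem:empty} (as $0$ lies in the interval $[-\rho_+(a),\rho_+(a)]$), supplemented by the direct checks $a = \tfrac14$ (orbit increasing to $\tfrac12$) and $a = -2$ (orbit $0,-2,2,2,\dots$); divergence for $a < -2$ is case (i) (the orbit of $0$ escapes the Cantor set), and divergence for $a > \tfrac14$ is the estimate preceding Lemma~\ref{lem:empty}.

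For $S \subseteq \manhyp$: if $c \in S$ then $c_X, c_Y \in [-2,\tfrac14]$, so both real critical orbits are bounded and so is their product. For $D \subseteq \manhyp$: if $c \in D$ then one characteristic coordinate of $c$ vanishes, say $c_X = 0$ (so $c \in D_+$; the case $c \in D_-$ is symmetric). Then $0$ is a fixed point of $f_0(x) = x^2$, so $f_{c_X}^n(0) = 0$ for every $n$ and the product $f_{c_X}^n(0)\, f_{c_Y}^n(0) \equiv 0$ is bounded no matter how the other coordinate behaves. This is exactly why the diagonals lie in $\manhyp$ in their entirety, not merely inside $S$.

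The reverse inclusion $\manhyp \subseteq S \cup D$ carries the real content. Suppose $c \in \manhyp$ but $c \notin S$, so some coordinate lies outside $[-2,\tfrac14]$; say $c_X \notin [-2,\tfrac14]$, whence $|f_{c_X}^n(0)| \to \infty$. Boundedness of the product forces $|f_{c_Y}^n(0)| \le B_c / |f_{c_X}^n(0)| \to 0$, so $f_{c_Y}^n(0) \to 0$. The crux --- and the step I expect to be the main obstacle --- is to deduce $c_Y = 0$ from this. Here I would use continuity: $f_{c_Y}^{n+1}(0) = f_{c_Y}\!\left(f_{c_Y}^n(0)\right) \to f_{c_Y}(0) = c_Y$, while the left side also tends to $0$, so $c_Y = 0$ and $c \in D_-$. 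The symmetric argument treats $c_Y \notin [-2,\tfrac14]$. Together with the two containments above this yields $\manhyp = S \cup D$; the equivalence of the two displayed formulas for $S$ is the routine substitution $c_X = x - y$, $c_Y = x + y$, under which $\{c_X, c_Y \in [-2,\tfrac14]\}$ becomes $\{\max(x-\tfrac14, -x-2) \le y \le \min(x+2, -x+\tfrac14)\}$.
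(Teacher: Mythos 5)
Your proposal is correct and follows essentially the same route as the paper: decouple into characteristic coordinates via \eqref{exchar}, use the real fact that the critical orbit of $x^2+a$ is bounded exactly for $a\in[-2,\tfrac14]$ to get $S$, and treat the diagonals separately because one factor of the product $f_{c_X}^n(0)f_{c_Y}^n(0)$ vanishes identically there. Your version is in fact slightly more careful than the paper's on the reverse inclusion, where the continuity argument $f_{c_Y}^{n+1}(0)=f_{c_Y}\!\left(f_{c_Y}^n(0)\right)\to f_{c_Y}(0)=c_Y$ cleanly justifies the paper's terser assertion that $f_{c_Y}^n(0)\to 0$ only when $c_Y=0$.
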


\begin{figure}[h]
\centerline{\includegraphics[width=6.2in]{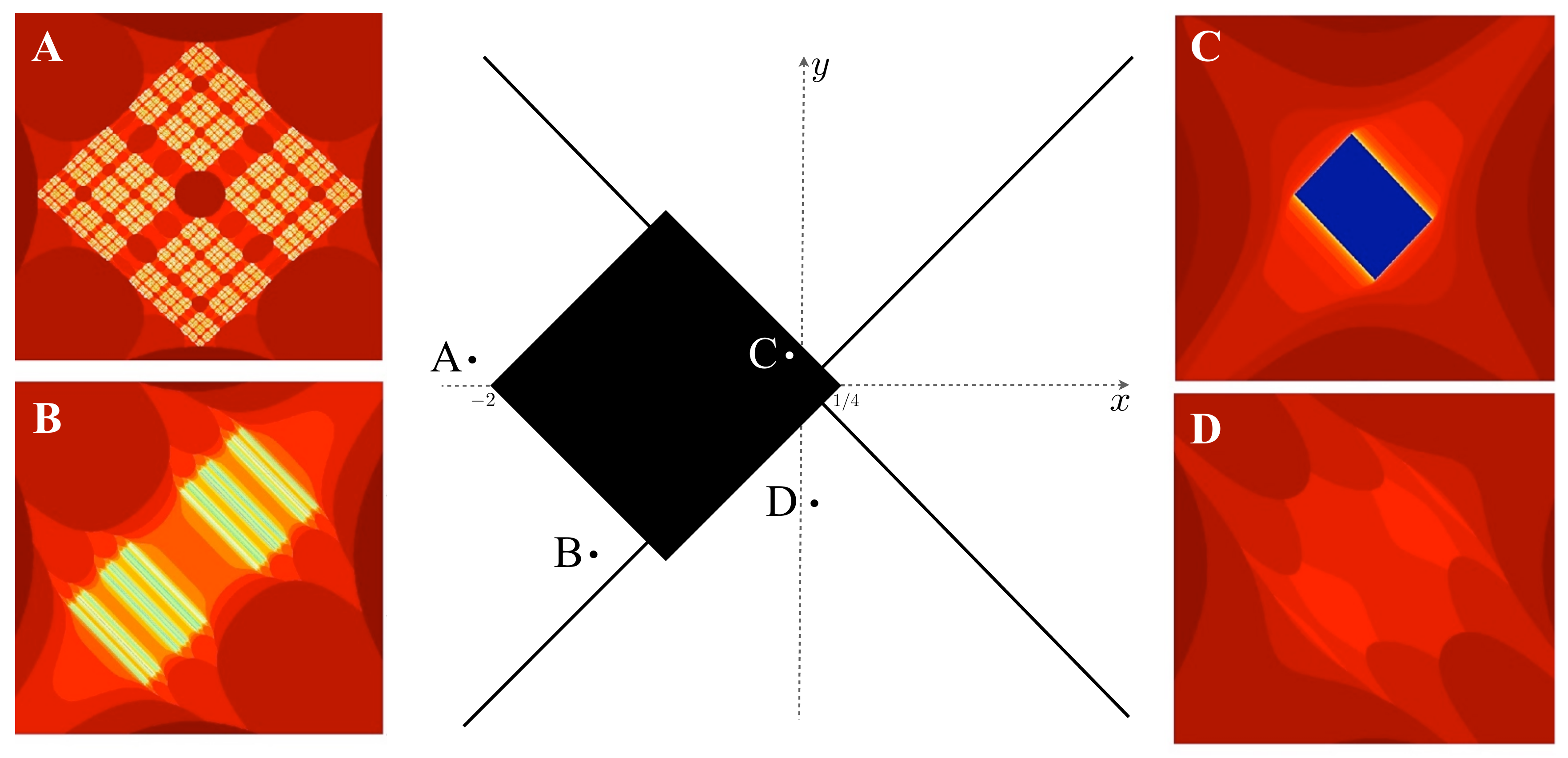}}
\caption{The hyperbolic Mandelbrot set is shown as a subset of parameter space $\mathbb{H}$ in the center panel.  The four points labeled A-D in the center panel taken as parameter values $c$ in $f_c(z) = z^2 + c$ give rise to the four types of Julia sets shown on the side panels:  (A) totally disconnected, (B) disconnected, (C) connected, and (D) empty. As with Fig.~\ref{complexJuliaset}, the colors represent the iterations performed 
before the norm of the iterate grew larger than the bound (chosen to be 4 for these simulations).  Red represents the quickest growth beyond our divergence criterion, whereas blue represents an initial condition whose orbit did not grow beyond the bound in the number of iterations we performed (200).}
\label{hyperbolicJuliaset}
\end{figure}



\begin{proof}


We need to determine the values of $c$ for which $|f_c^n(0=0+0\tau)|$ is bounded as $n$ approaches infinity. The expression
 (\ref{exchar}) for iterates of the map $f_c(z)$ in characteristic coordinates allows us to write
$|f_c^n(0)| = |f^n_{c_X}(0)f^n_{c_Y}(0)|.$
According to Lemma~\ref{lem:empty}, $f^n_{c_{X,Y}}(0)$ are bounded for (and only for) $-2 \leq c_{X,Y} \leq \frac{1}{4}$.  Thus, $|f_c^n(0)|$ is bounded for $(c_X,c_Y) \in S= \left\{(X,Y) \in \left[-2,\textstyle{\frac{1}{4}}\right]^2\right\}$.


It could also be the case that, without loss of generality, $f^n_{c_X}(0) \rightarrow 0$ but $f^n_{c_Y}(0) \rightarrow \infty$ in a manner so that their product is bounded. Since $f^n_{c_X}(0) \rightarrow 0$ only for $c_{X} = 0$, such cases occur only for $(c_X,c_Y)$ on the union $D$. $D$ is, in fact, in $\manhyp$: Since $D_+$ and $D_-$ are closed under addition and multiplication, the restrictions $f_c\big\vert_{D_\pm}:D_\pm\to D_\pm$ are well defined. But since $\left|z\bar{z}\right|= 0$ for all $z \in D$, we have that $D \subset \manhyp$.

\end{proof}

\begin{remark}
\label{rem:diaginman}
As implied by Theorem \ref{thm:quad} below, the fact that the part of $D$ outside of $S$ is in the Mandelbrot set is largely an artifact of  the fact that $D_+$ and $D_-$ are ideals of $\mathbb{H}$.
\end{remark}

\section{Hyperbolic Julia Sets}
\label{sec:hypjul}

Over the complex numbers, $\mancom$ determines the points in parameter space which correspond to connected Julia sets, and one may ask if $\manhyp$ performs the analogous role for the hyperbolic numbers. The positive answer may be given more nuance, as $\mathbb{H}$ as a parameter space admits a wall-and-chamber decomposition based on the form of $\fjulhyp (f)$, in which $\manhyp$ is the chamber corresponding to connectedness of nonempty filled Julia sets. We now develop this decomposition explicitly.


\begin{proposition}
\label{prop:juliaproduct}
For $c\in\mathbb{H}$, let $c=(c_X,c_Y)$ be its description in characteristic coordinates and let $f_c(z) = z^2+c$. Write $f_{c_X}(X) = X^2+c_X$ and $f_{c_Y}(Y) = Y^2+c_Y$. For $c_X,c_Y \neq 0$, the filled hyperbolic Julia set $\fjulhyp(f_c)$ is equal to the Cartesian product of $\fjulcom(f_{c_X})\cap\mathbb{R}$ and $\fjulcom(f_{c_Y})\cap \mathbb{R}$.
\end{proposition}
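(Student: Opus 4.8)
The plan is to reduce the hyperbolic boundedness condition to a pair of real boundedness conditions, one for each characteristic coordinate, and then to show that when $c_X, c_Y \neq 0$ these conditions cannot conspire to keep a product bounded while an individual orbit escapes. First I would rewrite the defining inequality: since $z\bar z = XY$ in characteristic coordinates (Equation \ref{rem:charnorm}) and iterates decouple by Equation \ref{exchar}, for $z_0 = (X_0,Y_0)$ we have
\[
\left| f_c^n(z_0)\,\bar{f_c^n(z_0)} \right| = \left| f_{c_X}^n(X_0)\, f_{c_Y}^n(Y_0) \right|.
\]
Moreover, for a real initial point the condition that $|f^n(X_0)\overline{f^n(X_0)}| = |f^n(X_0)|^2$ is bounded holds exactly when the real orbit $\{f_{c_X}^n(X_0)\}$ is bounded, so $\fjulcom(f_{c_X})\cap\mathbb{R}$ is precisely the set of real $X_0$ with bounded $f_{c_X}$-orbit, and similarly for $Y$. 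The proposition thereby becomes the statement that the product of the two real orbits is bounded if and only if each factor is bounded.

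The inclusion $\supseteq$ is immediate: if both orbits are bounded, say by $M_X$ and $M_Y$, then the product is bounded by $M_X M_Y$, so $z_0 \in \fjulhyp(f_c)$. For the reverse inclusion I argue by contrapositive. Suppose, without loss of generality, that $X_0 \notin \fjulcom(f_{c_X})\cap\mathbb{R}$, so the orbit $X_n := f_{c_X}^n(X_0)$ is unbounded. A standard escape estimate for the real quadratic map (see \cite{devaney92}) shows that an unbounded real orbit in fact diverges, $X_n \to +\infty$. I then want to conclude that the product $|X_n Y_n|$ is unbounded; the only way this could fail is if $Y_n := f_{c_Y}^n(Y_0) \to 0$, and here is where the hypothesis $c_Y \neq 0$ enters.

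The key lemma is that for $c \neq 0$ no orbit of $f_c(x) = x^2 + c$ converges to $0$: if $f_c^n(x_0)\to 0$ then by continuity $f_c^{n+1}(x_0) = f_c(f_c^n(x_0)) \to f_c(0) = c \neq 0$, contradicting that the shifted sequence has the same limit $0$. Hence $Y_n \not\to 0$, so there exist $\delta > 0$ and a subsequence $n_k$ with $|Y_{n_k}| \geq \delta$. Along this subsequence $|X_{n_k} Y_{n_k}| \geq \delta\, X_{n_k}$ for $k$ large, which tends to $+\infty$; thus the product orbit is unbounded and $z_0 \notin \fjulhyp(f_c)$. This handles the case in which exactly one coordinate escapes, and when both coordinates escape one has $X_n, Y_n \to +\infty$ so the product diverges directly.

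The main obstacle is precisely the mixed case above: an escaping coordinate paired with a coordinate whose orbit might a priori accumulate at $0$, allowing the product to remain bounded. Ruling this out is exactly where the condition $c_X, c_Y \neq 0$ does its essential work, via the no-convergence-to-zero lemma and the subsequence argument; it is also the same phenomenon responsible for the diagonals $D$ entering $\manhyp$ when $c_X c_Y = 0$, as noted in Theorem \ref{thm:mandsquare}.
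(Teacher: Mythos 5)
Your proposal is correct and follows essentially the same route as the paper: decouple the dynamics in characteristic coordinates via Equation (\ref{exchar}), observe that the only way a bounded product can conceal an escaping factor is for the other factor's orbit to tend to $0$, and rule this out using $c_X,c_Y\neq 0$. You actually supply more detail than the paper does at the key step --- the escape estimate, the no-convergence-to-zero lemma, and the subsequence argument --- where the paper simply asserts the equivalence of product-boundedness and factor-boundedness after noting that orbits converge to $0$ only when $c=0$.
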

\begin{proof}
Let $z_0 = (X_0,Y_0)$ in characteristic coordinates. By equation $(1)$, $\left|f^n(z_0) \bar{f^n(z_0)}\right| = \Big| f_{c_X}^n(X_0) f_{c_Y}^n(Y_0) \Big|$ = $\Big| f_{c_X}^n(X_0)\Big| \Big| f_{c_Y}^n(Y_0) \Big|$.  According to the discussion leading to Lemma~\ref{lem:empty},  $ \lim_{n\rightarrow \infty} f_c^n(X_0) = 0$ only for $c=0$.  For $c_X,c_Y \neq 0$, 
there is a $B_{z_0}$ such that $\Big| f_{c_X}^n(X_0) f_{c_Y}^n(Y_0) \Big| < B_{z_0}$ for all $n$ if and only if there is some $M_{z_0} \in \mathbb{R}$ such that  for all $n$ $\Big| f_{c_X}^n(X_0) \Big|$, $\Big| f_{c_Y}^n(Y_0) \Big| < M_{z_o}$. But, since $c_X,X_0\in\mathbb{R}$, we have $\Big| f_{c_X}^n(X_0) \Big| < M_{z_0}$ if and only if $X_0 \in \fjulcom(f_{c_X}) \cap \mathbb{R}$, $Y_0 \in \fjulcom(f_{c_Y}) \cap \mathbb{R}$. We conclude that, for $c_X,c_Y \neq 0$,
\begin{align*}
\fjulhyp(f_c)=\left\{(X,Y): \Big| f_{c,X}^n(X) \cdot f_{c,Y}^n(Y) \Big| < B_{z_0} \right\}= \left\{ \fjulcom(f_{c_X})\cap\mathbb{R} \right\} \times \left\{ \fjulcom(f_{c_Y})\cap \mathbb{R}\right\}.
\end{align*}
\end{proof}



Examples of filled hyperbolic Julia sets are shown in the side panels of Fig.~\ref{hyperbolicJuliaset}.  The filled hyperbolic Julia may be totally disconnected (panel A), connected but not totally disconnected (panel B), connected and nonempty (panel C), or empty (panel D).  These examples represent the decomposition of $\mathbb{H}$ stated in the following analog to the Dichotomy Theorem of complex Mandelbrot and filled Julia sets and depicted in Fig.~\ref{fig:quadchotomytheorem}:

\begin{theorem}[Quadchotomy]
\label{thm:quad}

For $c\in\mathbb{H}$, let $f_c(z) = z^2+c$, and let $c = (c_X,c_Y)$ in characteristic coordinates, with $c_X,c_Y \neq 0$ . Then $\mathbb{H}$ admits a wall-and-chamber decomposition as follows:
\begin{itemize}
\item[(i)] if $c\in S$, then $\fjulhyp(f_c)$ is nonempty and connected;
\item[(ii)] if one of $c_X,c_Y$ is in $[-2,\frac{1}{4}]$ and the other is less than or equal to  $-2$, then $\fjulcom(f_c)$ is disconnected;
\item[(iii)] if $c_X,c_Y < -2$, then $\fjulcom(f_c)$ is totally disconnected;
\item[(iv)] otherwise, $\fjulhyp(f_c)$ is empty.
\end{itemize}
\end{theorem}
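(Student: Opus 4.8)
The plan is to leverage Proposition~\ref{prop:juliaproduct}, which under the standing hypothesis $c_X,c_Y\neq 0$ identifies $\fjulhyp(f_c)$ with the Cartesian product
\[
A\times B,\qquad A:=\fjulcom(f_{c_X})\cap\mathbb{R},\quad B:=\fjulcom(f_{c_Y})\cap\mathbb{R},
\]
together with Lemma~\ref{lem:empty}, which classifies each factor purely in terms of its real parameter. The theorem then reduces to bookkeeping: determine the topological type of each of $A$ and $B$ from the position of $c_X$ and $c_Y$, and read off the type of the product. First I would record, for a real parameter $c$, that $\fjulcom(f_c)\cap\mathbb{R}$ is a nondegenerate closed interval when $c\in[-2,\tfrac14]$, a Cantor set when $c<-2$, and empty when $c>\tfrac14$; this is Lemma~\ref{lem:empty} once its two endpoint values are supplied.

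Next I would invoke four elementary facts about products of subspaces of $\mathbb{R}$: a product is nonempty exactly when both factors are nonempty; a product of nonempty sets is connected iff both factors are connected; a product is totally disconnected iff both factors are totally disconnected; and the product of a nondegenerate interval with a Cantor set is disconnected (split off a clopen piece of the Cantor factor to separate it) yet not totally disconnected (each slice $\{\mathrm{pt}\}\times\text{interval}$ is a nondegenerate connected set). Matching the three possible types of $A$ and $B$ against these facts produces the four chambers: two intervals give a filled rectangle, connected and nonempty, which is precisely $c\in S$; two Cantor sets give a product of Cantor sets, totally disconnected, corresponding to $c_X,c_Y<-2$; an interval with a Cantor set gives the disconnected-but-not-totally-disconnected case, corresponding to one coordinate in $[-2,\tfrac14]$ and the other below $-2$; and any occurrence of an empty factor, i.e. $c_X>\tfrac14$ or $c_Y>\tfrac14$, gives the empty set. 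Here the word \emph{disconnected} in~(ii) is understood in the strong sense depicted in panel~(B) of Fig.~\ref{hyperbolicJuliaset}, namely disconnected but not totally disconnected, which is exactly what the interval-times-Cantor computation delivers.

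The argument is a reduction followed by a finite case check, so the only real care is needed at the walls, and this is where I expect the one genuine subtlety. Lemma~\ref{lem:empty} is stated with strict inequalities and is silent at the two endpoints $c=-2$ and $c=\tfrac14$; I would close this gap by noting that at $c=\tfrac14$ the real filled Julia set is the nondegenerate interval $[-\tfrac12,\tfrac12]$, and at $c=-2$ (equivalently $r=4$ in the logistic coordinates of Section~\ref{sec:hypmand}) the bounded orbits fill the whole interval $[-2,2]$, so both endpoints fall in the interval regime. This makes the assignment of types unambiguous on the chamber boundaries and confirms that the walls $c_X\in\{-2,\tfrac14\}$ and $c_Y\in\{-2,\tfrac14\}$ genuinely separate the four chambers. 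Beyond this boundary accounting I anticipate no deeper obstacle: the product structure of Proposition~\ref{prop:juliaproduct} does all the work, converting the two-dimensional dynamical question into two independent one-dimensional ones whose answers are already tabulated.
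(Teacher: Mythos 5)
Your proposal is correct and follows essentially the same route as the paper: reduce via Proposition~\ref{prop:juliaproduct} to the product of the two real sets $\fjulcom(f_{c_X})\cap\mathbb{R}$ and $\fjulcom(f_{c_Y})\cap\mathbb{R}$ classified by Lemma~\ref{lem:empty}, then read off the topological type of the product in each chamber. Your added care at the walls $c_X,c_Y\in\{-2,\tfrac14\}$, where the lemma's inequalities are strict, is a minor refinement of the paper's argument rather than a different approach.
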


\begin{center}
\begin{figure}[H]
\begin{tikzpicture}[scale=2];
\draw [thick] [<->] (0,2) -- (0,0) -- (3,0);
\draw [thick] [-] (0,-3) -- (0,0) -- (-3,0);
\draw [thick](.25,.25) -- (.25,-2);
\draw [thick](.25,-2) -- (-2,-2);
\draw [thick](-2,-2) -- (-2,.25);
\draw [thick](-2,.25) -- (.25,.25);
\draw [blue][dashed] (.25,.25) -- (.25,2);
\draw [blue][dashed] (.25,.25) -- (3,.25);
\draw [blue][dashed] (-2,.25) -- (-3,.25);
\draw [blue][dashed] (-2,.25) -- (-2,2);
\draw [blue][dashed] (-2,-2) -- (-3,-2);
\draw [blue][dashed] (-2,-2) -- (-2,-3);
\draw [blue][dashed] (.25,-2) -- (.25,-3);
\draw [blue][dashed] (.25,-2) -- (3,-2);
\node [below right] at (3,0) {$X$};
\node [above left] at (0,2) {$Y$};
\node [below] at (-1,-1) {Connected};
\node [below] at (1,1) {$\varnothing$};
\node [below] at (1,-1) {$\varnothing$};
\node [below] at (1,-2.3) {$\varnothing$};
\node [below] at (-2.5,1) {$\varnothing$};
\node [below] at (-1,1) {$\varnothing$};
\node [below] at (-3,-2.5) {Totally Disconnected};
\node [below] at (-1,-2.5) {Disconnected};
\node [below] at (-3,-1) {Disconnected};
\node [below left] at (-2,-2) {$(-2,-2)$};
\node [above right] at (.25,.25) {$(\frac{1}{4},\frac{1}{4})$};
\end{tikzpicture}
\caption{The wall-and-chamber decomposition of the Quadchotomy Theorem, Theorem \ref{thm:quad}.  The hyperbolic plane in characteristic coordinates is divided into regions, as labeled, in which parameter values $(c_X,c_Y)$ yield Julia sets for $f_c(z) = z^2+c$, $c=\frac{1}{2}(c_X+c_Y)+ \tau \frac{1}{2}(c_Y-c_X)$, $c_X,c_Y \neq 0$, are connected and nonempty, disconnected but not totally disconnected, totally disconnected, or empty.}
\label{fig:quadchotomytheorem}
\end{figure}
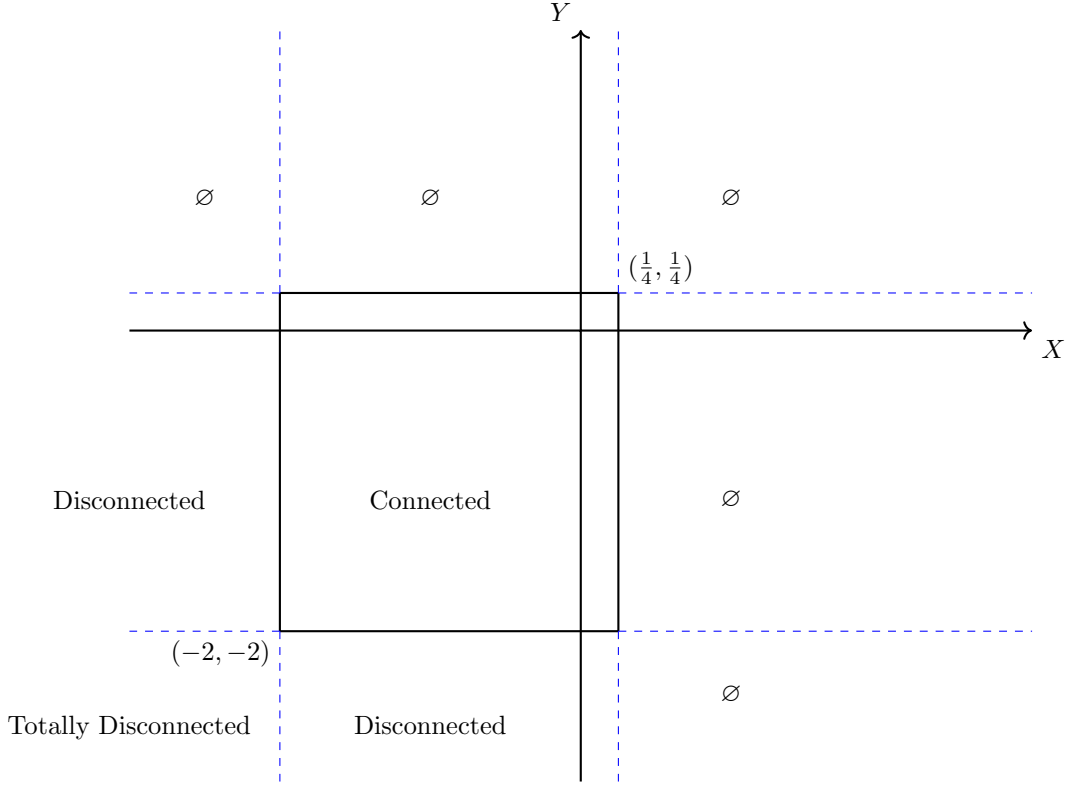

\end{center}


\begin{proof}
By Proposition \ref{prop:juliaproduct}, we need to understand $\fjulcom(f_{c_X})\cap \mathbb{R}$ and $\fjulcom(f_{c_Y})\cap \mathbb{R}$, which are given in Lemma~\ref{lem:empty}.

Part (i): When $c\in S$, $c_X,c_Y\in [-2,\frac{1}{4}]$, and $\fjulcom(f_{c_X})$ and $\fjulcom(f_{c_Y})$ are both simply connected and conjugate-invariant. Thus $\fjulcom(f_{c_X})\cap\mathbb{R}$ and $\fjulcom(f_{c_Y})\cap\mathbb{R}$ are both connected, so $\fjulhyp(f_c)$ is as well.

Part (ii): In this case exactly one of $\fjulcom(f_{c_X})\cap\mathbb{R}$ or $\fjulcom(f_{c_Y})\cap\mathbb{R}$ is connected; the other is a Cantor set. The product of a Cantor set and a connected set is a disconnected set.

Part (iii): Both $\fjulcom(f_{c_X})\cap\mathbb{R}$ and $\fjulcom(f_{c_Y})\cap\mathbb{R}$ are Cantor sets, which are totally disconnected, and the product of two totally disconnected sets is again totally disconnected.

Part (iv): By Lemma \ref{lem:empty}, at least one of $\fjulcom(f_{c_X})\cap\mathbb{R}$ or $\fjulcom(f_{c_Y})\cap\mathbb{R}$ is empty, so their product is as well.

\end{proof}

We ignored the characteristic axes in Theorem~\ref{thm:quad}.  We compute their Julia sets as follows:  If, say, $c_Y = 0$, then $\left|f_c^n(z_0) \bar{f_c^n(z_0)}\right| = \Big| f_{c_X}^n(X_0) f_{c_Y}^n(Y_0) \Big| = Y_0^{2n} \Big| f_{c_X}^n(X_0)\Big|$.  The ratio of $\left|f^{n+1}(z_0) \bar{f^{n+1}(z_0)}\right|$ to $\left|f^n(z_0) \bar{f^n(z_0)}\right|$ is 
$Y_0^{2} \frac{\Big| f_{c_X}^{n+1}(X_0)\Big|}{\Big| f_{c_X}^n(X_0)\Big|}= Y_0^{2} \frac{\Big| f_{c_X}\left(f_{c_X}^{n}(X_0)\right)\Big|}{\Big| f_{c_X}^n(X_0)\Big|} = Y_0^{2} R\left( f_{c_X}^n(X_0) \right)$, for  $R(x) = x + \frac{c_X}{x}.$
If $\Big| f_{c_X}^n(X_0)\Big|$ is unbounded, then so is this ratio since $R(x)$ approaches infinity with $x$.  That is, for $c_X>\frac{1}{4}$, the filled Julia set is empty.  For $c_X < \frac{1}{4}$, the Julia set is the product $(X_0,Y_0) \in \mathfrak{C} \times [-1,1]$ for a Cantor set $\mathfrak{C}$ contained in $\mathfrak{C} = [-\rho_+(c_X), -\gamma(c_X)] \cup [\gamma(c_X),\rho_+(c_X)]$.

\subsection*{Acknowledgements} The authors thank the Colorado State University College of Natural Sciences and Department of Mathematics for supporting the undergraduate research program in which this research was conducted in Summer 2018.



\end{document}